\renewcommand{\\}{\vspace{3mm}}
\newenvironment{edit}{\color{red}}{\color{black}}
\newcommand{\aed}{\begin{edit}}
\newcommand{\zed}{\end{edit}}
\newenvironment{change}{\color{blue}}{\color{black}}
\newcommand{\ach}{\begin{change}}
\newcommand{\zch}{\end{change}}
\providecommand{\aut}{\mathop{\rm Aut \,}\nolimits}
\providecommand{\sym}{\mathop{\rm Sym \,}\nolimits}
\newcommand{\N}{\mathbb{N}}
\newcommand{\Z}{\mathbb{Z}}
\newcommand{\A}{\text{Aut}}
\newcommand{\B}{\mathscr{B}}
\theoremstyle{plain}
\newtheorem{theorem}{Theorem}
\newtheorem*{theorem*}{Theorem}
\newtheorem{corollary}[theorem]{Corollary}
\newtheorem*{corollary*}{Corollary}
\newtheorem{proposition}[theorem]{Proposition}
\theoremstyle{definition}
\newtheorem{remark}[theorem]{Remark}
\newtheorem{example}[theorem]{Example}
\newtheorem*{example*}{Example}
\newtheorem*{remark*}{Remark}
\title{{\bf Bounding the distinguishing number of infinite graphs}}
\author{{\bf Simon M.~Smith\MakeLowercase{$^a$}, Mark E.~Watkins\MakeLowercase{$^b$}}}
\address{$^a$Mathematics Department, NYC College of Technology,\\ City University of New York, NY, USA\\
$^b$Mathematics Department, Syracuse University, Syracuse, NY, USA
\footnote{Email addresses: sismith@citytech.cuny.edu, mewatkin@syr.edu}
}
\begin{document}

\begin{abstract} A group of permutations $G$ of a set $V$ is $k$-{\em distinguishable} if there exists a partition of $V$ into $k$ parts such that only the identity permutation in $G$ fixes setwise all of the cells of the partition.  The least cardinal number $k$ such that $(G,V)$ is $k$-distinguishable is its {\em distinguishing number} $D(G,V)$. In particular, a graph $\Gamma$ is {\em $k$-distinguishable} if its automorphism group $\A(\Gamma)$ satisfies $D(\A(\Gamma),V\Gamma)\leq k$.

Various results in the literature demonstrate that when an infinite graph fails to have some property, then often some finite subgraph is similarly deficient. In this paper we show that whenever an infinite connected graph $\Gamma$ is not $k$-distinguishable (for a given cardinal $k$), then it contains a ball $B$ of finite radius whose distinguishing number is at least $k$. Moreover, this lower bound cannot be sharpened, since for any integer $k \geq 3$ there exists an infinite, locally finite, connected  graph $\Gamma$ that is not $k$-distinguishable but in which every ball of finite radius is $k$-distinguishable. 

In the second half of this paper we show that a large distinguishing number for an imprimitive graph $\Gamma$ is traceable to a high distinguishing number either of a block of imprimitivity or of the induced action of $\A(\Gamma)$ on the corresponding system of imprimitivity. The distinguishing numbers of infinite primitive graphs have been examined in detail in a previous paper by the authors together with T.\,W.\, Tucker.
\end{abstract}

\maketitle

\section{Introduction}

In infinite graph theory, one frequently considers properties that are finitely describable. A theorem in this vein may state that if an infinite graph fails to satisfy a certain finitely describable property, then some finite subgraph is the likely culprit. For example:
\begin{itemize}
\item
	if an infinite graph $\Gamma$ is not $k$-colorable, then there exists a finite subgraph of $\Gamma$ that is not $k$-colorable (N.\,G.\,de Bruijn and P.\,Erd\H{o}s \cite{bruijn_erdos_51}); and
\item
	if a countably infinite graph is not planar, then some (finite) subgraph is homeomorphic to one of the Kuratowski graphs $K_5$ or $K_{3,3}$ and hence is not planar (attributed to P.\,Erd\H{o}s by G.\,Dirac and S.\,Schuster \cite{dirac_schuster}).
\end{itemize}

In this note we consider the property of distinguishability:  a permutation group $G$ acting (faithfully) on a set $V$ (which we often write as a pair $(G,V)$) is $k$-{\em distinguishable} if there exists a partition of $V$ with $k$ cells such that only the identity permutation in $G$ fixes setwise all of the cells of the partition. If $k$ is the minimal cardinal such that the permutation group $(G,V)$ is $k$-distinguishable, then $k$ is the {\em distinguishing number} of $(G,V)$, and we write $D(G,V)=k$.  Applying this notion to graphs, we say that a graph $\Gamma$ is {\em $k$-distinguishable} if its automorphism group $\A(\Gamma)$ satisfies $D(\A(\Gamma),V\Gamma)\leq k$.  (This notion, applied to finite graphs, is originally due to Albertson and Collins \cite{AC1}.)   For brevity, unless some proper subgroup of $\A(\Gamma)$ is being considered, we write simply $D(\Gamma)$ instead of $D(\A(\Gamma),V\Gamma)$.

One might hope that if an infinite graph $\Gamma$ fails to be $k$-distinguishable, then some ``interesting'' substructure ought to bear the blame.  Indeed, this is already known for countably infinite trees: if a countably infinite tree has finite distinguishing number $k$, then some finite subtree also has distinguishing number $k$ (see \cite{watkins_zhou}). In this paper we present two substructures that may be blamed: one is a graph-theoretical substructure and the other is algebraic.

In the first part of this paper we look at combinatorial substructures and demonstrate a class of finite-diameter subgraphs of $\Gamma$ that give a meaningful upper bound for $D(\Gamma)$. In the second part of this paper we look at algebraic substructures, demonstrating a sharp upper bound for $D(\Gamma)$ in terms of the distinguishing number of a block of imprimitivity and the induced action of $\aut(\Gamma)$ on the corresponding system of imprimitivity.\\

For infinite graphs in general, the parameter of distinguishing number is not as well-behaved as parameters such as chromatic number and genus; the distinguishing number of a subgraph of a graph $\Gamma$ is not necessarily less than or equal to, but also may be greater than $D(\Gamma)$. For example, any connected graph with infinite diameter contains finite induced subgraphs with distinguishing number $k$ for all $k \in \mathbb{N}$ (to wit, the null graph on $k$ vertices).
The class of subgraphs of finite diameter that we've selected for consideration are the {\em ball-graphs} $B(x,n)$:  for $n \in \mathbb{N}$,  $B(x,n)$ is the subgraph of $\Gamma$ induced by the vertex set $\{y \in V\Gamma : d(x,y) \leq n\}$. Its  {\em radius} is $n$ and it is {\em centered} at $x$.\\

Suppose that $k-1$ is the largest valence of the vertices of a connected graph $\Gamma$.  If $\Gamma$ is finite, then $D(\Gamma)\leq k$ (see \cite[Theorem 4.2]{collins_trenk}).  When $\Gamma$ is infinite, the sharper bound of $D(\Gamma)\leq k-1$ is obtained (see \cite[Theorem 2.1]{imrich_et_al:distinguishing}).  This easily yields the following.

\begin{proposition}Let $\Gamma$ be a connected graph without $3$-cycles and let  $k$ denote some cardinal. If $\Gamma$ is not $k$-distinguishable, then there exists a vertex $x\in V\Gamma$ such that $B(x, 1)$ has distinguishing number at least $k$. \qed
\end{proposition}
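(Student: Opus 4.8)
The plan is to argue by contraposition, exploiting the fact that the absence of $3$-cycles forces every ball of radius $1$ to be a star. First I would observe that if $\Gamma$ has no $3$-cycles, then for any vertex $x$ no two neighbors of $x$ can be adjacent, since such an edge would close a triangle through $x$. Hence the induced subgraph $B(x,1)$ is the star $K_{1,m}$ with $m=\deg(x)$, the center being $x$ and the leaves being the neighbors of $x$.

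Next I would compute the distinguishing number of a star. In $K_{1,m}$ with $m\geq 2$ every permutation of the $m$ leaves extends to an automorphism fixing the center (which is the unique vertex of degree $m$ once $m\geq 3$; the case $m=2$ is immediate), so a partition is distinguishing exactly when no two leaves lie in a common cell. Thus at least $m$ cells are required and $m$ cells suffice, giving $D(B(x,1))=\deg(x)$ whenever $\deg(x)\geq 2$. For $\deg(x)\leq 1$ one checks directly that $D(B(x,1))\leq 2$, which only helps the conclusion in those small cases.

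With these two observations in hand I would prove the contrapositive: assume $D(B(x,1))<k$ for every vertex $x$. By the star computation this forces $\deg(x)<k$ for every $x$, so the largest valence $\Delta$ of $\Gamma$ satisfies $\Delta<k$, i.e.\ $\Delta\leq k-1$. Now I would invoke the two valence bounds quoted just above the statement. If $\Gamma$ is finite, then $D(\Gamma)\leq \Delta+1\leq k$ by \cite[Theorem 4.2]{collins_trenk}; if $\Gamma$ is infinite, then $D(\Gamma)\leq \Delta\leq k-1\leq k$ by \cite[Theorem 2.1]{imrich_et_al:distinguishing}. In either case $\Gamma$ is $k$-distinguishable, which is precisely the contrapositive of the assertion; the infinite-$k$ case requires no new idea, the same reduction applying with the bound read in the appropriate cardinal sense.

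The argument is short because the genuine content is imported from the cited valence bounds; the only real step of my own is the star computation. I expect no serious obstacle. The two points to be careful about are the bookkeeping between the two forms of the bound (the extra ``$+1$'' in the finite case, which is exactly absorbed by $\Delta\leq k-1$) and the low-valence cases $\deg(x)\leq 1$, where $D(B(x,1))=\deg(x)$ fails but only in a direction that is harmless. The key conceptual move is simply recognizing that triangle-freeness collapses each radius-$1$ ball to a star whose distinguishing number equals its valence.
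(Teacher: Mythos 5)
Your proof is correct and is essentially the paper's own argument: the paper states the proposition with no written proof beyond citing the two valence bounds (Collins--Trenk for finite graphs, Imrich--Klav\v{z}ar--Trofimov for infinite ones), and the intended ``easy'' deduction is exactly your observation that triangle-freeness makes each $B(x,1)$ a star whose distinguishing number equals its valence, so that the contrapositive hypothesis bounds the maximum valence by $k-1$. Your handling of the low-degree and small-$k$ edge cases is a careful spelling-out of details the paper leaves implicit, and it is sound.
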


We extend this result considerably.

\begin{theorem} \label{thm_finite_balls}  Let $\Gamma$ be a connected graph and let  $k$ denote some cardinal.  If  $\Gamma$ is not $k$-distinguishable, then, for any vertex $x\in V\Gamma$, all but finitely many ball-graphs centered at $x$ have distinguishing number at least $k$.  
\end{theorem}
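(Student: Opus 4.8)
The plan is to argue by contraposition: assuming that the set of radii $n$ with $D(B(x,n)) < k$ is infinite, I would construct a partition of $V\Gamma$ into $k$ cells preserved only by the identity, which shows $\Gamma$ is $k$-distinguishable. Fix an infinite set of radii $n_1 < n_2 < \cdots$ with $D(B(x,n_j)) < k$. The first observation I would record is that every $g \in \A(\Gamma)$ fixing $x$ preserves distance from $x$, hence maps each sphere $S_j = \{v : d(x,v)=j\}$ to itself; consequently $g$ maps every ball $B(x,n)$ onto itself and restricts to an automorphism of the induced ball-graph, giving a homomorphism $\rho_n \colon \A(\Gamma)_x \to \A(B(x,n))$. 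Thus a colouring of $B(x,n)$ that is distinguishing for $\A(B(x,n))$ is also distinguishing for the image $\rho_n(\A(\Gamma)_x)$: if $g \in \A(\Gamma)_x$ preserves the cells of such a colouring on $B(x,n)$, then $\rho_n(g)$ is the identity, i.e.\ $g$ fixes $B(x,n)$ pointwise.

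The heart of the argument is an inductive construction of a colouring of $\Gamma$ with $k$ cells, using one colour, say colour $k$, reserved to mark $x$ uniquely, and the remaining $k-1$ colours (for infinite $k$, a palette of fewer than $k$ colours) to carry distinguishing colourings of larger and larger balls. Call a colouring $c'$ of $B(x,m)$ \emph{rigid} if every $g \in \A(\Gamma)_x$ preserving $c'$ fixes $B(x,m)$ pointwise. The key step is an Extension Lemma: if $c'$ is rigid on $B(x,m)$ with colour $k$ used only at $x$, and $M>m$ satisfies $D(B(x,M))<k$, then $c'$ extends to a rigid colouring of $B(x,M)$ with colour $k$ still used only at $x$. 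To prove it I would take a colouring $d$ of the ball-graph $B(x,M)$ distinguishing for $\A(B(x,M))$ and using colours $1,\dots,k-1$, and define $\tilde c$ to equal $c'$ on $B(x,m)$ and to equal $d$ on the annulus $B(x,M)\setminus B(x,m)$. If $g \in \A(\Gamma)_x$ preserves $\tilde c$, then since $g$ fixes each sphere setwise it preserves the inner ball and the annulus separately; rigidity of $c'$ forces $g$ to fix $B(x,m)$ pointwise, so $g$ trivially preserves $d$ there, while on the annulus $\tilde c = d$, so $g$ preserves $d$ on all of $B(x,M)$ and hence fixes $B(x,M)$ pointwise.

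Starting from the rigid colouring of $B(x,0)=\{x\}$ given by $c'(x)=k$ and applying the Extension Lemma successively along $n_1 < n_2 < \cdots$ yields a nested family of rigid colourings that cohere into a single colouring $c$ of $V\Gamma = \bigcup_j B(x,n_j)$ with $k$ cells in which $x$ is the unique vertex of colour $k$. Any $g \in \A(\Gamma)$ preserving $c$ must fix $x$, hence lies in $\A(\Gamma)_x$, and then fixes every $B(x,n_j)$ pointwise by rigidity; as these balls exhaust $V\Gamma$, $g$ is the identity. I expect the main obstacle to be exactly the difficulty the Extension Lemma overcomes: distinguishing colourings of different balls are a priori unrelated, and neither restricting a distinguishing colouring to a smaller ball nor extending one to a larger ball preserves the distinguishing property, precisely because automorphisms of a ball-graph need not fix its centre. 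Passing to the sphere-preserving subgroup $\A(\Gamma)_x$ and freezing the inner ball before importing the next ball's colouring is what makes the colourings reconcilable; this route invokes no local-finiteness hypothesis, so it should treat arbitrary connected $\Gamma$ and arbitrary cardinals $k$ uniformly.
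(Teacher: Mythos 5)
Your proposal is correct and takes essentially the same route as the paper's proof: both argue the contrapositive, reserve one colour to mark the centre $x$ uniquely, extend colourings ball by ball by freezing the inner ball pointwise and importing the next ball's distinguishing colouring on the annulus, and then take the union of the nested colourings. The only difference is bookkeeping: the paper's inductive invariant is that each partial colouring $\psi_j$ is a distinguishing colouring of the ball-graph $B(j)$ itself (each $\varphi_i$ being normalized so that $x$ alone receives the colour $c_0$), whereas you maintain the slightly weaker invariant of rigidity with respect to the stabilizer $\A(\Gamma)_x$ and let the fresh colouring $d$ of the larger ball do the work of killing the full restriction --- the two formulations are interchangeable here.
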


\begin{corollary}\label{SharpBound}  If the graph $\Gamma$ of the above theorem is locally finite, then $k$ is a sharp lower bound for the distinguishing number of its ball-graphs.
\end{corollary}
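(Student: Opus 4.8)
The lower-bound half of Corollary~\ref{SharpBound} is immediate from Theorem~\ref{thm_finite_balls}: since $\Gamma$ is not $k$-distinguishable, all but finitely many of its ball-graphs have distinguishing number at least $k$, and local finiteness guarantees that each $B(x,n)$ is a \emph{finite} graph, so that ``$k$ is a lower bound'' is a genuine assertion about finite distinguishing numbers. Thus the real content is \emph{sharpness}: the constant $k$ cannot be replaced by anything larger. To establish this I would exhibit, for each integer $k\ge 3$, a single witness — an infinite, locally finite, connected graph $\Gamma$ that is not $k$-distinguishable, yet in which \emph{every} ball-graph of finite radius is $k$-distinguishable. For such a $\Gamma$, Theorem~\ref{thm_finite_balls} then forces all but finitely many balls to have distinguishing number \emph{exactly} $k$, so the bound is attained and cannot be improved.

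The design must reconcile two opposing demands: each ball should be \emph{easy} ($D\le k$), while the whole graph is \emph{hard} ($D>k$). The mechanism I would use is a ``column'' gadget. Arrange the vertex set into finite cliques $C_i\cong K_k$, one for each $i\in\Z$, joining consecutive cliques so that the $k$ vertices of any single column are mutually interchangeable by automorphisms fixing everything else; that is, so that each copy of $S_k$ acting on one column sits \emph{independently} inside $\A(\Gamma)$. (The completely-joined lexicographic product $P_\infty[K_k]$ realizes exactly this independence.) The force of the gadget is the following \emph{rainbow principle}: in any colouring with only $k$ colours, if two vertices of a column received the same colour, the transposition swapping them would be a non-trivial colour-preserving automorphism; hence every distinguishing $k$-colouring must colour each column with a bijection onto the palette. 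This is the source of rigidity.

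Rainbowing each column leaves no freedom to separate one column from the next, since any two rainbowed columns are colour-isomorphic. Thus, if the columns are strung along a base carrying a \emph{shift} automorphism $\sigma$, then the colour-matching shift, sending the colour-$t$ vertex of $C_i$ to the colour-$t$ vertex of $C_{i+1}$, is automatically a non-trivial colour-preserving automorphism of the whole graph; as this occurs for \emph{every} $k$-colouring, $\Gamma$ is not $k$-distinguishable. The subtlety — and the main obstacle — is that the symmetric base (the two-way-infinite path underlying $P_\infty[K_k]$) also carries a \emph{reflection}, and the same colour-matching argument shows that the reflection survives rainbowing inside every ball; this would make the balls hard as well and destroy the gap. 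The remedy is to build the base as a \emph{chiral} (orientation-carrying, reflection-free) locally finite spine whose only automorphisms are the shifts $\langle\sigma\rangle\cong\Z$, and to attach the columns so that this rigidity is inherited. The genuinely delicate step is to verify that the $K_k$-blow-up of such a spine introduces no unexpected automorphisms: one must compute $\A(\Gamma)$ and confirm that, apart from the independent per-column copies of $S_k$ and the shift group $\langle\sigma\rangle$, it is trivial — in particular reflection-free. The orientation-markers used to rigidify the spine also consume colours, which is consistent with the threshold $k\ge 3$ and with $k=2$ being genuinely exceptional.

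Granting such a base, the two verifications are routine. For the balls: each $B(x,n)$ is a finite oriented segment of columns, so — the spine being rigid and reflection-free — its only non-trivial automorphisms are the independent column permutations, the orientation-markers being pinned down individually by their positions along the segment. Rainbowing each column with $k$ colours kills all of these, and since each ball contains a full $K_k$ it needs at least $k$ colours; hence $D(B(x,n))=k$. For the whole graph, the colour-matching shift argument gives $D(\Gamma)>k$, while one extra colour, used to impose an aperiodic pattern of column-types along the spine, breaks $\sigma$; as the spine is already reflection-free, no symmetry survives and $D(\Gamma)=k+1$. I expect essentially all the difficulty to reside in the construction and the automorphism-group computation of the chiral spine: once $\A(\Gamma)$ is known to be $\bigl(\prod_{i\in\Z}S_k\bigr)\rtimes\langle\sigma\rangle$ with no reflections, both inequalities follow from the rainbow principle.
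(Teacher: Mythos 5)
Your strategy is in essence the paper's own: the sharpness witness in the paper (Example~\ref{main_example}) is exactly a two-way infinite strip of ``columns'' indexed by $\Z$, with a rainbow principle forcing every $k$-colouring to be invariant under a colour-matching shift, and a chiral, periodic structure making every finite ball rigid. The genuine gap is that you never construct the witness. You write ``Granting such a base, the two verifications are routine'' and concede that ``essentially all the difficulty'' lies in building the chiral spine and computing $\A(\Gamma)$ --- but that construction \emph{is} the proof of Corollary~\ref{SharpBound}; everything else is framing. A blind proof that defers precisely this step proves nothing.

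Moreover, the deferred step is not routine along the route you sketch, because attaching ``orientation-markers'' to a $P_\infty[K_k]$-type spine has a specific defect. Marker vertices belong to $V\Gamma$ and must be coloured; the rainbow principle constrains only the columns, so a $k$-colouring is free to paint the markers aperiodically (give one marker a colour pattern no other marker of its type receives), after which the colour-matching shift is no longer colour-preserving and $\Gamma$ may become $k$-distinguishable --- destroying your non-distinguishability argument. What is really needed is that \emph{every} structural unit of the graph has all of its distinguishing $k$-colourings equivalent under its own automorphism group, so that $k$ colours leave no freedom anywhere to encode an aperiodic pattern. The paper achieves this with no extra vertices at all: the layers cycle with period $3$ through $K_k$, the graph $K_{k(k-1)}$ minus a $1$-factor, and $\overline{K_k}$, with consecutive layers completely joined. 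Each of these three graphs has distinguishing number exactly $k$ and its distinguishing $k$-colouring is unique up to automorphisms, while the three layer types are told apart inside $\Gamma$ by their valences $k^2+k-1$, $k^2+k-2$, $k^2$ (distinct precisely when $k\geq 3$), which supplies the chirality and pins every layer of a finite ball setwise. One further slip: ``each ball contains a full $K_k$ so it needs at least $k$ colours'' is not a valid inference, since distinguishing number is not monotone under subgraphs (the paper stresses this in the Introduction); the correct reason is that an interior column is a class of mutually adjacent vertices with identical closed neighbourhoods in the ball, so $\sym(k)$ embeds in the ball's automorphism group.
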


Notice that we are providing here an upper bound for the distinguishing number of $\Gamma$ in terms of the distinguishing number of its finite ball-subgraphs. It is tempting to think it  possible to obtain a more interesting lower bound for infinite graphs $\Gamma$ than $D(\Gamma) \geq 2$ when $\aut(\Gamma)$ is not trivial, but this is impossible. It is easy to construct an example of a connected graph $\Gamma$ in which the distinguishing numbers of the ball-graphs centered at any given vertex of $\Gamma$ are not bounded above, while the whole graph is $2$-distinguishable: consider for example a rooted tree in which, for all $n \in \mathbb{N}$, all the vertices at distance $n$ from the root have valence $n$. \\

The purpose of the second part of this article is to describe the distinguishing number of an  imprimitive graph in terms of its blocks of imprimitivity.  Recall that a transitive group $G \leq \sym(V)$ is {\em primitive} if the only $G$-invariant equivalence relations on $V$ are either trivial or universal.  A graph is {\em primitive} if its automorphism group acts primitively on its vertex set. If $G$ is transitive but not primitive, then it is {\em imprimitive}, and there exists a nontrivial and non-universal $G$-invariant equivalence relation $\rho$ on $V$. Any equivalence class $B$ with respect to $\rho$ is called a {\em block of imprimitivity}, or simply a {\em block}. The set $\B=\{B^g : g \in G\}$ is the set of all equivalence classes of $\rho$, and is called a {\em system of imprimitivity}. The group $G$ naturally induces a transitive group of permutations  on $\B$.  If $H$ is the subgroup of $G$ that fixes every block setwise, then $H \lhd G$ and $G/H$ acts transitively and faithfully on $\B$.  

In \cite[Theorem 1]{akos_seress}, \'{A}.\,Seress showed that all finite primitive permutation groups of degree strictly greater than 32, other than the symmetric and the alternating groups, have distinguishing number 2. It was shown in \cite{sms_twt_mew} that every infinite primitive permutation group with finite {\em suborbits} (orbits of a point-stabilizer) has distinguishing number 2, and thus that the distinguishing number of any nonnull, infinite,  locally-finite, primitive graph is equal to 2.  In the light of these results we investigated imprimitive graphs and concluded that a high distinguishing number for an imprimitive graph $\Gamma$ is accompanied by the property that in any system of imprimitivity $\B$ of $\Gamma$, either:
\begin{enumerate}
\item
	each block $B \in \B$ has a high distinguishing number; or
\item
	the action induced by $\A(\Gamma)$ on the system of imprimitivity has a high distinguishing number.
\end{enumerate}

We determine the reasons for this, obtaining sharp bounds for the distinguishing number of (i) the distinguishing number of any block $B \in \B$ and (ii) any imprimitive permutation group $G$ in terms of the distinguishing number of the induced action of $G$ on any system of imprimitivity $\B$.

For a given cardinal $n$ and a graph $\Lambda$, we denote the disjoint union of $n$ copies of $\Lambda$ by $n \Lambda$.  For each $U\subset V\Gamma$, we let $\langle U\rangle$ denote the subgraph of $\Gamma$ induced by $U$.

The following is the main result of the second part of this article.

\begin{theorem} \label{thm_imprimitive}
Let $\Gamma$ be a vertex-transitive graph such that $(\aut(\Gamma),V\Gamma)$ admits a system of imprimitivity $\B$.  Let $\cong$ denote the equivalence relation on $V\Gamma$ of belonging to the same block.  If the cardinal $n$ denotes the distinguishing number of the action of $G$ on the quotient graph $(\Gamma/\cong)$, then $D(\Gamma)\leq D(n \langle B\rangle)$ for all $B\in\B$.  Furthermore, this bound is sharp.
\end{theorem}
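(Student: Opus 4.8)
The plan is to prove the inequality by building an explicit distinguishing colouring of $\Gamma$ with $m := D(n\langle B\rangle)$ colours, and then to establish sharpness via a lexicographic product. Write $G=\aut(\Gamma)$, let $H\lhd G$ be the kernel of the action of $G$ on $\B$, so that $G/H$ is the faithful transitive group induced on $\B\cong V(\Gamma/\cong)$, and fix a reference block $B_0$. The key preliminary observation is a description of distinguishing colourings of the disjoint union $n\langle B\rangle$: any such colouring restricts to the $n$ copies to give colourings $\gamma_1,\dots,\gamma_n$ of $\langle B_0\rangle$ that are each \emph{rigid} (no nontrivial element of $\aut(\langle B_0\rangle)$ preserves $\gamma_i$) and \emph{pairwise inequivalent} (no element of $\aut(\langle B_0\rangle)$ carries $\gamma_i$ to $\gamma_j$ for $i\neq j$). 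This holds whether or not $\langle B_0\rangle$ is connected, since the automorphisms of $n\langle B_0\rangle$ acting within a single copy, and those interchanging two copies identically, always lie in $\aut(n\langle B_0\rangle)$ and hence must be broken.

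\textbf{The two-step argument.} Using these $\gamma_i$, I would colour $V\Gamma$ as follows. Choose a distinguishing $n$-colouring $\beta\colon\B\to\{1,\dots,n\}$ of the action of $G/H$ on $\B$, and for each block $B$ fix some $g_B\in G$ with $B_0^{g_B}=B$ (with $g_{B_0}=\mathrm{id}$); colour $B$ by transporting $\gamma_{\beta(B)}$ along $g_B$, i.e.\ set $c(v)=\gamma_{\beta(B)}\bigl(g_B^{-1}(v)\bigr)$ for $v\in B$. Now suppose $g\in G$ preserves $c$. Comparing colours on a block $B$ and its image $B^g$ shows that $g_{B^g}^{-1}\,g\,g_B$ restricts to some $\psi\in\aut(\langle B_0\rangle)$ with $\gamma_{\beta(B^g)}\circ\psi=\gamma_{\beta(B)}$, so pairwise inequivalence of the $\gamma_i$ forces $\beta(B^g)=\beta(B)$. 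Thus $g$ preserves $\beta$, whence $gH$ fixes the distinguishing colouring $\beta$ of $G/H$ and $g\in H$. Consequently $g$ fixes every block setwise, and on each block the induced automorphism preserves the rigid colouring $\gamma_{\beta(B)}$, forcing $g|_B=\mathrm{id}$. Since the blocks partition $V\Gamma$ we obtain $g=\mathrm{id}$, so $c$ is distinguishing and $D(\Gamma)\leq m$.

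\textbf{Main obstacle.} I expect the delicate point to be the bookkeeping in transporting colourings: the restriction $c|_B$ depends on the choice of $g_B$, but only up to $\aut(\langle B\rangle)$-equivalence, and the argument must be arranged so that this ambiguity is absorbed by rigidity and inequivalence rather than interfering with them. A secondary subtlety, already flagged above, is verifying the preliminary characterization when $\langle B\rangle$ is disconnected, where $\aut(n\langle B\rangle)$ is larger than a plain wreath product; fortunately only the easy direction (that a distinguishing colouring \emph{yields} rigid, pairwise-inequivalent $\gamma_i$) is needed for the construction.

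\textbf{Sharpness.} For the lower bound I would take $\Gamma=\Delta[\Lambda]$, the lexicographic product of a vertex-transitive graph $\Delta$ with $D(\Delta)=n$ by a vertex-transitive graph $\Lambda$ on at least two vertices, chosen so that Sabidussi's conditions give $\aut(\Gamma)=\aut(\Lambda)\Wr\aut(\Delta)$. Then the blocks $\{u\}\times V\Lambda$ form a genuine system of imprimitivity, $\langle B\rangle\cong\Lambda$, and the induced action on $\Gamma/\cong\,\cong\Delta$ is the full $\aut(\Delta)$, of distinguishing number $n$. Running the characterization of the first step in reverse, any distinguishing colouring of $\Gamma$ restricts to rigid colourings $c_u$ of $\Lambda$ whose $\aut(\Lambda)$-equivalence classes constitute a distinguishing colouring of $\Delta$; as the latter must involve at least $n$ classes, the palette realizes $n$ pairwise-inequivalent rigid colourings of $\Lambda$ and therefore has at least $D(n\Lambda)=D(n\langle B\rangle)$ colours. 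This yields $D(\Gamma)\geq D(n\langle B\rangle)$, matching the upper bound and proving sharpness.
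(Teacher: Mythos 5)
Your proposal is correct, and its two halves relate to the paper differently. For the upper bound, your construction is essentially the paper's argument in inlined form: the paper first proves an abstract permutation-group lemma (Theorem~\ref{thm:wr}), showing $D(G,V)\leq D(H\wr\sym(X), A\times X)$ by transporting a distinguishing coloring of the wreath-product action along coset representatives $f(B)=g_B$, exactly as your $c(v)=\gamma_{\beta(B)}\bigl(g_B^{-1}(v)\bigr)$ does; your ``key preliminary observation'' (restrictions of a distinguishing coloring of $n\langle B\rangle$ are rigid and pairwise inequivalent) is precisely what the paper extracts inside that proof by exhibiting wreath-product elements $(\theta,\sigma)$, together with the containment $\aut(\langle B\rangle)\wr\sym(N)\leq\aut(n\langle B\rangle)$ used in the deduction of Theorem~\ref{thm_imprimitive}. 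What the paper's packaging buys is a standalone theorem about arbitrary imprimitive permutation groups (later tied to Chan's theorem); what your packaging buys is a self-contained graph-theoretic proof with no wreath-product formalism. For sharpness, however, you take a genuinely different and more laborious route: the paper simply observes that $\Gamma=nB$ (with $B$ connected and vertex-transitive) attains equality, since then $\Gamma$ literally equals $n\langle B\rangle$ and the quotient action is $\sym(n)$ with distinguishing number $n$; connected examples are then obtained by a one-line complementation remark. Your lexicographic-product examples $\Delta[\Lambda]$ with Sabidussi's theorem also work and produce connected examples directly, via a genuine lower-bound argument ($D(\Gamma)\geq D(n\Lambda)$) that reverses your characterization.

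One small caveat in your sharpness argument: the final step, converting $n$ pairwise-inequivalent rigid colorings of $\Lambda$ into a distinguishing coloring of $n\Lambda$ on the same palette, requires $\aut(n\Lambda)=\aut(\Lambda)\wr\sym(n)$, i.e.\ it needs $\Lambda$ connected --- the very point you flagged as avoidable in the upper bound. Sabidussi's conditions alone do not force this (they concern connectedness of $\Lambda$ or of its complement depending on twins in $\Delta$), so you should add the explicit requirement that $\Lambda$ be connected (e.g.\ $\Delta=K_n$ and $\Lambda=C_5$ satisfies everything). With that stipulation the argument is complete.
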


To conclude the article we show that when the cardinal number $n$ is finite, Theorem~\ref{thm_imprimitive} may be deduced (with a little work) from a theorem of Melody Chan \cite[Theorem 2.3]{chan06}.

%%%%%%%%%%%Section 2
\section{Distinguishing number and ball-graphs}

We begin by bounding the distinguishing number of a connected graph in terms of the distinguishing number of its ball-subgraphs. Corollary \ref{SharpBound} will follow from Example~\ref{main_example} below.

\begin{proof}[Proof of Theorem~\ref{thm_finite_balls}]  It may be assumed that $\Gamma$ has infinite diameter; otherwise there is nothing to prove.  Since $\Gamma$ is connected, this assumption implies that for all $x\in V\Gamma$ and all $m,n\in\N$, if $m<n$, then $B(x,m)$ is a proper subgraph of $B(x,n)$.

We prove the contrapositive.  Suppose that for some $x \in V\Gamma$ there exists an infinite increasing subsequence $\{n_i\}_{i \in \N}$ from $\N$ such that $D(B(x,n_i)) < k$ for each $i \in \mathbb{N}$.  Let us abbreviate $B(x,n_i)$ by $B(i)$.  Let $X$ be a set (of colors), with $|X|= k$ and fix $c_0 \in X$.  It follows that for each $i\in\N$, there exists a distinguishing coloring $\varphi_i:VB(i)\to X$ with the property that $\varphi_i(y)=c_0$ if and only if $y=x$.

We now construct a coloring $\psi:V\Gamma\to X$ with the property that $\psi(y)=c_0$ if and only if $y=x$ and prove by induction on $i$ that $\psi$ is $k$-distinguishing on $B(i)\setminus B(i-1)$ for all $i\in\N$.  From this it will follow that $D(\Gamma)\leq k$.

We begin by setting $\psi_1=\varphi_1$ and remarking that $\psi_1$ is a distinguishing coloring of $B(1)$ with at most $k$ colors that assigns to $y \in VB(1)$ the color $c_0$ if and only if $y=x$.  For $j\geq2$ we define the $k$-coloring $\psi_j: VB(j)\to X$ by

\[\psi_j(y) = \begin{cases} \psi_{j-1}(y) \quad & \text{if $y \in VB(j-1)$,}\\ \varphi_j(y) \quad & \text{if $y\in VB(j)\setminus VB(j-1)$.} \end{cases}\]
Our induction hypothesis is that for all $i<j$, $\psi_i$ is a distinguishing coloring of $B(i)$ that agrees with $\psi_{i-1}$ on $VB(i-1)$.  We claim that $\psi_j$ is a distinguishing coloring of $B(j)$ and is an extension of $\psi_{j-1}$.  For some $g \in \aut(B(j))$, suppose that $\psi_j(y^g) = \psi_j(y)$ for all $y \in VB(j)$. Since $\psi_j(y) = c_0$ if and only if $y = x$, we have that $g$ fixes $x$ and therefore $g$ fixes $VB(j-1)$ setwise. Since $\psi_{j-1}$ is a distinguishing coloring of $B(j-1)$ while $\psi_j$ and $\psi_{j-1}$ agree on $VB(j-1)$, $\psi_j$ restricted to $B(j-1)$ is a distinguishing coloring of $B(j-1)$; hence $g$ fixes $VB(j-1)$ pointwise.  But $g$ also fixes $VB(j) \setminus VB(j-1)$ setwise.  Hence for all $y\in VB(j) \setminus VB(j-1)$, we have $\varphi_j(y)=\psi_j(y)=\psi_j(y^g)=\varphi_j(y^g)$. We have shown that for all $y \in B(j)$ we have $\varphi_j(y) = \varphi_j(y^g)$, which implies $y=y^g$ because $\varphi_j$ is a distinguishing coloring of $B(j)$.  Hence $\psi_j$ is a distinguishing coloring of $B(j)$ that agrees with $\psi_i$ on $B(i)$ whenever $i\leq j$.

Define a function $\psi : V\Gamma \rightarrow X$ as $\psi(y) = \psi_i(y)$ whenever $y \in VB(i) $. The argument of the preceding paragraph implies that $\psi$ is well-defined.  We claim that $\psi$ is a distinguishing coloring of $\Gamma$. For suppose that $\psi(y^g) = \psi(y)$ for some $g \in \aut(\Gamma)$ and all $y \in V\Gamma$.  Then $g$ fixes $x$ and therefore $g$ fixes setwise every set $VB(i)$. Moreover, for all $i \in \mathbb{N}$ and for all $y \in VB(i)$, the function $\psi_i$ is a distinguishing coloring of $B(i)$; since $\psi_i(y^g) = \psi(y^g) = \psi(y) = \psi_i(y)$, it follows that $y^g = y$. Hence $\psi$ is a distinguishing coloring of $\Gamma$, and so $D(\Gamma) \leq|X|=k$.  
\end{proof}

%%%%%%%%Example5
\begin{example} \label{main_example} This example demonstrates the sharpness of the lower bound in the case of locally finite graphs.  For any given integer $k \geq 3$, we construct an infinite, locally finite graph $\Gamma$ with the following two properties:
\begin{enumerate}
\item
	$D(\Gamma)=k+1$; and
\item For all $x \in V\Gamma$, all but finitely many ball-graphs centered at $x$ have distinguishing number $k$.
\end{enumerate}

Let $A_0$ be the complete graph $K_k$ on $k$ vertices; let $A_1$ be the complete graph $K_{k(k-1)}$ minus a $1$-factor, and let $A_2$ be the complement of $A_0$ (i.e., the null graph of order $k$).  Write $[n] := n\pmod{3}$. Let $\Gamma$ be the infinite, locally finite graph with vertex set $V\Gamma = \bigcup_{n \in \Z} \left ( VA_{[n]}\times \{n\} \right )$, in which two vertices $( x,m), (y,n) \in V\Gamma$ are adjacent if and only if 
\begin{enumerate}
\item
	$n=m$ and $x$ and $y$ are adjacent in $A_{[n]}$; or
\item
	$|n-m|=1$.
\end{enumerate}
Thus $\Gamma$ is a strip (i.e., a 2-ended graph admitting a translation; see \cite{JW}).   Intuitively, $\Gamma$ has the following form:
\[\cdots-A_2 - A_0-A_1-A_2-A_0-A_1-\cdots\]
in which each vertex in any copy of $A_{[n]}$ is adjacent to every vertex in its adjacent copies of $A_{[n-1]}$ and $A_{[n+1]}$.  For $n \in\Z$, let $H_n$ be the subgraph of $\Gamma$ induced by $VA_{[n]}\times\{n\}$ (so $H_n \cong A_{[n]}$), and let $\mathcal{H}:=\{H_n : n \in \mathbb{Z}\}$.
\medskip

Let us first examine $\aut(\Gamma)$.  We remark that each of the subgraphs $H_n$ is a vertex-transitive graph.  One easily verifies that for all $m\in\N$ the valences (in $\Gamma$) of vertices in $H_{3m},\ H_{3m+1}$, and $H_{3m+2}$ are, respectively, $k^2+k-1$, $k^2+k-2$, and  $k^2$.  Since $k\geq3$, these three integers are distinct, and so the orbit of any vertex in $VH_n$ is the set $\bigcup\{VH_m:m\equiv n\pmod3\}$.

Fix $g \in \aut(\Gamma)$.  Since $H_0 \cong A_0$ is connected, it must therefore hold  that $H_0^g=H_{3m}$ for some $m\in\Z$.  Since $A_1$ is also connected, and since every vertex in $H_0$ is adjacent to every vertex in $H_1$, it follows that $H_1^g=H_{3m+1}$.  Since the $k$ vertices of $H_{3m+2}$ must be the images of the remaining $k$ neighbors of the vertices of $H_1$, we have $H_2^g=H_{3m+2}$.  Proceeding in this manner through both positive and negative subscripts, one shows inductively that $g$ satisfies $H_n^g=H_{3m+n}$ for all $n\in\Z$.  If $m\neq0$, then $g$ is a translation.

To prove that  that $\Gamma$ is not $k$-distinguishable, suppose that $\varphi:V\Gamma\to\{1, 2, \ldots, k\}$ is a distinguishing coloring of $\Gamma$.  Using each of the $\binom{k}2$ (unordered) pairs of colors for each of the $\binom{k}2$ pairs of nonadjacent vertices of $A_1$, we have $D(A_1)=k$.  Thus $D(H_n)=k$ for all $n\in\Z$.  Specifically, there exists a distinguishing $k$-coloring of each subgraph $H_n$, and it is unique up to a permutation of the colors.  Moreover, for each $n\in\Z$, there exists an automorphism $h_n\in\A(H_n)$ such that, for all $(x,n)\in V\Gamma$, we have $\varphi(x,n)=\varphi\left((x,n+3)^{h_{n+3}}\right)$.  But then all $k$ color classes determined by $\varphi$ are preserved by the translation $(x,n)\mapsto(x,n+3)^{h_{n+3}}$.

We next show that $\Gamma$ is $(k+1)$-distinguishable.  For our palette of colors, we now use  $X := \{0, 1, 2, \ldots,k\}$.  By the previous argument, we know that we may let $\varphi_0 : VH_0 \rightarrow X \setminus \{k\}$ be a distinguishing $k$-coloring of $H_0$, and for any integer $n \neq 0$ let $\varphi_n : VH_n \rightarrow X \setminus \{0\}$ be a distinguishing $k$-coloring of $H_n$.  The unique vertex of $H_0$ colored 0 must therefore be fixed by any automorphism $g\in\aut(\Gamma)$ that preserves the color classes of $\varphi$, and so $g$ fixes setwise each subgraph $H_n$.  But $g$ preserves the distinguishing coloring $\varphi_n$ of $H_n$ for every $n \in \Z$. Hence $g$ is the identity automorphism.

It remains only to show that for any given $(x, n) \in V\Gamma$ and integer $m\geq3$, the ball-graph $B:=B\left ((x,n), m \right )$ has distinguishing number at most $k$.  Clearly
$$VB = \bigcup_{i=n-m}^{n+m} VH_{i}.$$
If $|n-i|<m$, then the valence of a vertex in $H_i$ is the same in both $B$ and $\Gamma$.  If $|n-i|=m$, then the valence of a vertex in $H_i$ is one of the five smaller values:  $k^2-1$, $k^2-2,\ k^2-k,\ 2k-1,\  k$.  By an inductive argument similar to the one above, it follows that every automorphism of $B$ fixes $H_i$ setwise whenever $|n-i|\leq m$.  Since $D(H_i)=k$ it must hold that $D(B) \leq k$.

We remark that one can find a $k$-subset of the vertices of $H_{3m-1}$ whose union with $H_{3m}$ is isomorphic to $K_{2k}$, but the resulting graph is not a ball-graph.   
\end{example}

%%%%%%%%%%Section 3
\section{Distinguishing number and imprimitivity}

In this section, given an arbitrary infinite set $V$, we obtain an upper bound for the distinguishing number for a group $G$ of permutations acting imprimitively on $V$.  We then apply this bound to the group of automorphisms of a locally finite graph in order to demonstrate that our bound is sharp.  These results complement those of Section 3 of \cite{sms_twt_mew}, which concern primitive group actions.  Since imprimitive groups can be embedded in wreath products in a natural way, we first present for completeness a definition and notation for the wreath product of two permutation groups.  (See, for example, \cite[pp 67--72]{bat_mac_mol_neu}.)

Let $H \leq \sym(A)$ and $K \leq \sym(B)$.  Then the {\em wreath product} $H \wr K$ is defined to be the semidirect product $\text{Fun}(B, H) \rtimes K$, where $\text{Fun}(B, H)$ is the group of functions from $B$ to $H$. The wreath product $H \wr K$ has a faithful action (called the imprimitive action) on $A \times B$, defined as follows: for all $(a,b) \in A \times B$, $f \in \text{Fun}(B, H)$, and $k \in K$,
\begin{equation}\label{defn_wreath}
(a,b)^{(f, k)} := (a^{f(b)}, b^k).
\end{equation}

\begin{theorem} \label{thm:wr} Let $G \leq \sym(V)$ be a transitive group of permutations, let $\B$ be a system of imprimitivity induced by $G$, and let $A\in\B$. Let $H$ be the subgroup of $\sym(A)$ induced by the setwise stabilizer $G_{\{A\}}$.  If $X$ is any set such that  $|X| =D(G, \B)$, then
\[D(G, V) \leq D(H \wr \sym(X), A \times X).\]
\end{theorem}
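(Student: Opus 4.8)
The plan is to lift a distinguishing colouring of the wreath product back to $V$ through the block structure, using a distinguishing colouring of the block system as a ``routing map''. Concretely, I would fix a distinguishing colouring $\chi\colon\B\to X$ of the induced action $(G,\B)$, which exists because $|X|=D(G,\B)$, and a distinguishing colouring $\Phi\colon A\times X\to Y$ of $(H\wr\sym(X),A\times X)$ with $|Y|=D(H\wr\sym(X),A\times X)$. Choosing for each block $B\in\B$ an element $g_B\in G$ with $A^{g_B}=B$ (and $g_A=1$) yields a coordinate map $\alpha$ sending $v\in B$ to $\alpha(v):=v^{g_B^{-1}}\in A$. I then colour $V$ by $\psi(v):=\Phi\bigl(\alpha(v),\chi(B_v)\bigr)$, where $B_v$ is the block containing $v$. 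The whole task reduces to showing that $\psi$ distinguishes $(G,V)$, for then $D(G,V)\le|Y|$ as required.

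Before analysing $\psi$ I would record two structural features of \emph{any} distinguishing colouring $\Phi$ of the wreath product, each proved by exhibiting a colour-preserving element and invoking that $\Phi$ is distinguishing. First, for each fixed $x$ the restriction $\Phi(\cdot,x)\colon A\to Y$ is itself a distinguishing colouring of $(H,A)$: were some $h\neq 1_H$ to stabilise it, the base-group element $(f,1)$ with $f(x)=h$ and $f$ trivial elsewhere would preserve $\Phi$. Second, the fibre-colourings are pairwise inequivalent modulo $H$: if $x\neq x'$ and $\Phi(a,x)=\Phi(a^{h_0},x')$ for all $a\in A$ and some $h_0\in H$, then the element $(f,(x\,x'))$ with $f(x)=h_0$, $f(x')=h_0^{-1}$, and $f$ trivial elsewhere preserves $\Phi$, yet is not the identity. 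Both conclusions contradict $\Phi$ being distinguishing.

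Now suppose $g\in G$ preserves $\psi$. A short transversal computation gives $\alpha(v^g)=\alpha(v)^{h(g,B)}$, where $h(g,B)\in H$ is induced by $g_B\,g\,g_{B^g}^{-1}\in G_{\{A\}}$; hence preservation of $\psi$ is equivalent to $\Phi\bigl(a^{h(g,B)},\chi(B^g)\bigr)=\Phi\bigl(a,\chi(B)\bigr)$ for every block $B$ and every $a\in A$. By the second observation this identity forces $\chi(B^g)=\chi(B)$ for all $B$, so $g$ preserves $\chi$ and therefore lies in the kernel $N$ of the action on $\B$; in particular $B^g=B$ for every block. With $B^g=B$ the displayed identity says that $h(g,B)$ stabilises the fibre-colouring $\Phi(\cdot,\chi(B))$, which by the first observation is distinguishing for $(H,A)$; hence $h(g,B)=1_H$, i.e.\ $g$ fixes $B$ pointwise. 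As this holds for every block, $g$ fixes $V$ pointwise, and faithfulness of $(G,V)$ gives $g=1$.

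The crux, and the step I expect to demand the most care, is that $\chi$ is in general far from injective, so the passage from $\B$ to the much smaller index set $X$ collapses many blocks onto a single fibre; one must be sure this collapse loses nothing. The two observations are precisely what license it: inequivalence of fibres prevents a colour-preserving $g$ from moving blocks between fibres (forcing $g\in N$), while distinguishability of each individual fibre guarantees that the single colouring reused on every block of a given colour still pins down the within-block action. The supporting bookkeeping, namely verifying the cocycle identity $\alpha(v^g)=\alpha(v)^{h(g,B)}$ and that $h(g,B)=1_H$ genuinely forces $g$ to fix $B$ pointwise, is routine but essential.
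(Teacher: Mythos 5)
Your proposal is correct and takes essentially the same route as the paper: the same transversal-based colouring of $V$ obtained by composing the block colouring $\chi$ with the wreath-product colouring, and the same two key constructions of elements of $H \wr \sym(X)$ (a base-group element supported on one fibre, and a fibre-swapping element paired with a transposition of $X$) to exploit the distinguishing property. The only difference is organizational: you package the paper's Case (i) and Case (ii) arguments as two standalone lemmas about the fibres of an arbitrary distinguishing colouring of $(H \wr \sym(X), A \times X)$, which the paper instead carries out inline with the specific data $h$, $\chi(B)$, $\chi(B^g)$.
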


\begin{proof}
Fix some block $A \in \B$. Let $\chi: \B \rightarrow X$ be a distinguishing coloring of $(G, \B)$, and let $\psi: A \times X \rightarrow Y$ be a distinguishing coloring of $(H \wr \sym(X), A \times X)$, where $Y$ is some sufficiently large set of colors.  Thus $D(H \wr \sym(X), A \times X)\leq|Y|$.

Since $(G, \B)$ is transitive, for each $B\in\B$, there exists $g_B\in G$ such that $B^{g_B}=A$.  This defines an injection $f:\B\to G$ given by $B\mapsto g_B$; that is,
\begin{equation*}
B^{f(B)}=A\ {\text{for each }}B\in\B.
\end{equation*}
We now define a coloring  $\phi: V \rightarrow Y$ as follows: for each $v \in V$, if $B$ is the block in $\B$ containing $v$, then
\begin{equation*}
\phi(v) := \psi \left ( \left ( v^{f(B)}, \chi(B) \right ) \right )
\end{equation*}
It remains only to show that $\phi$ describes a distinguishing coloring of $(G, V)$, for this will imply that, for any set $Y$, if $D(H \wr \sym(X), A \times X)\leq|Y|$, then $D(G,V)\leq|Y|$.

Suppose that some permutation $g\in G$ preserves all the color classes of $\phi$ in $V$. If $x \in A$ and $B \in\B$, then we have
\[\phi(x^{f(B)^{-1} g}) = \phi(x^{f(B)^{-1}}) = \psi \left ( \left ( x^{f(B)^{-1} f(B)}, \chi(B) \right ) \right ) = \psi \left ( \left ( x, \chi(B) \right ) \right ).\]
However, we also have that
\[\phi(x^{ f(B)^{-1} g}) = \psi \left ( \left ( x^{ f(B)^{-1} g \, f(B^g)}, \chi(B^g) \right ) \right ).\]
Hence, for all $x \in A$ and $B \in \B$, and for all $g \in G$ that preserve the coloring function $\phi$, we have
\begin{equation}\label{eq:b}
\psi \left ( \left ( x, \chi(B) \right ) \right ) = \psi \left ( \left ( x^{f(B)^{-1} g \,  f(B^g)}, \chi(B^g) \right ) \right ).
\end{equation}

Fix some $g \in G$ that preserves the coloring $\phi$ of $V$. We now show that $g$ must fix $V$ pointwise, from which it follows that $\phi$ is a distinguishing coloring of $(G,V)$. Fix $B \in \B$ and note that ${f(B)^{-1} g \, f(B^g)} \in G_{\{A\}}$. Let $h \in H$ be the permutation of $A$ induced by ${f(B)^{-1} g \, f(B^g)}$. Let $\sigma \in \sym(X)$ be the permutation of $X$ that interchanges the colors $\chi(B)$ and $\chi(B^g)$ and fixes every other element of $X$; thus either (i) $\sigma$ is is a transposition or (ii) $\sigma=1_X$.

Let us define $\theta : X \rightarrow H$ by
\[\theta(i) = 
\begin{cases}
h & \text{ \ if $i = \chi(B)$;}\\
h^{-1} & \text{ \ if $i = \chi(B^g)$;}\\
1_H & \text{ \ otherwise.}
\end{cases}
\]
Thus $(\theta, \sigma) \in H \wr \sym(X)$, and we must apply Equation (\ref{defn_wreath}) to evaluate $(x, \chi(B))^{(\theta, \sigma)}\in A \times X$ in each of the two cases.

In Case (i), where $\chi(B) \neq \chi(B^g)$, we have by Equation (\ref{defn_wreath}) that $(x, \chi(B))^{(\theta, \sigma)}=(x^{\theta(\chi(B))},\chi(B)^\sigma)=(x^h, \chi(B^g))$ and $(x^h, \chi(B^g))^{(\theta, \sigma)} = (x, \chi(B))$ for all $x\in A$, while all other elements of $A \times X$ remain fixed by $(\theta, \sigma)$. Thus, by Equation (\ref{eq:b}), the permutation $(\theta, \sigma)$ preserves the color classes of $\psi$ on $A\times X$ and is therefore the identity. Since this contradicts the assumption that $\chi(B) \not = \chi(B^g)$, Case (i) is not possible.

So, we must have Case (ii), where $\chi(B) = \chi(B^g)$. We now define
\[\theta(i) = 
\begin{cases}
h & \text{ \ if $i = \chi(B)$;}\\
1_H & \text{ \ otherwise}.
\end{cases}
\]
Applying Equation (\ref{defn_wreath}) again (and noting that $\sigma$ is trivial), we have  $(x, \chi(B))^{(\theta, \sigma)} = (x^h, \chi(B)) = (x^h, \chi(B^g))$ for all $x \in A$; every other element of $A \times X$ is fixed by $(\theta, \sigma)$. Thus by Equation (\ref{eq:b}) the permutation $(\theta, \sigma)$ preserves the coloring $\psi$, and is therefore the identity on $A\times X$. In particular, $h=1_A$.

Since $B \in \B$ was chosen arbitrarily, we have shown that for all $B \in \B$,
\[\chi(B) = \chi(B^g)\]
and
\[f(B)^{-1} g f(B) \in G_{(A)},\]
where $G_{(A)}$ here denotes the pointwise stabilizer in $G$ of $A$. Thus the action of $g$ on $\B$ preserves $\chi$, and so $g$ fixes each block in $\B$ setwise. Furthermore, if $y \in V$ then there exists some $B \in \B$ and $x \in A$ such that $y = x^{f(B)^{-1}} = \left ( x^{f(B)^{-1} g f(B)} \right )^{f(B)^{-1}} = x^{f(B)^{-1} g} = y^g$. Hence $g$ fixes $V$ pointwise, and $\phi$ is a distinguishing coloring of $(G, V)$.
\end{proof}

Using Theorem~\ref{thm:wr} it is not difficult to obtain a proof of Theorem~\ref{thm_imprimitive}. The key to the proof is the well-known observation that imprimitive permutation groups can be embedded inside wreath products.

\begin{proof}[Proof of Theorem~\ref{thm_imprimitive}]
Let $H$ be the subgroup of $G:=\aut(\langle B \rangle)$ induced by the setwise stabilizer $G_{\{B\}}$, and let  $N$ be a set such that $|N|=n$.  Represent  the vertex set of $n \langle B \rangle$ as $B\times N=\{(x, \nu) : x \in B;\  \nu \in N\}$, where we understand that for any given $\nu_0\in N$, the set $\{(x, \nu_0) : x \in B\}$ spans a copy of $\langle B\rangle$.

Since $G \wr \sym(N)$ acts as a group of permutations on $B\times N$, we have $G \wr \sym(N) \leq \sym(B \times N)$, and since $G \wr \sym(N)$ preserves the edge structure of $n \langle B \rangle$, we have $G \wr \sym(N) \leq \aut(n \langle B \rangle)$.  We now apply Theorem~\ref{thm:wr}, giving
$$D(\Gamma) \leq D(H\wr \sym(N), B \times N) \leq D(\aut(\langle B \rangle) \wr \sym(N), B \times N) \leq D(n \langle B \rangle).$$ 
This bound is sharp, since given a cardinal $n$ and a connected graph $B$, one could choose $\Gamma$ to be the graph $nB$.
\end{proof}

\begin{remark} The bound in Theorem~\ref{thm_imprimitive} is sharp even for connected graphs, since $n \langle B \rangle$ and its complement have the same distinguishing number.
\end{remark}

We are now able to bound the distinguishing number of an imprimitive graph in a simple way.

\begin{corollary} \label{thm_bounds} Under the hypothesis of Theorem~\ref{thm_imprimitive}, if $n:=D(\Gamma / \cong)$ and  $k := D(\langle B \rangle)$ are finite, then
\[D(\Gamma) \leq kn^{1/k}+1.\]
\end{corollary}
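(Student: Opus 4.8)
The plan is to combine Theorem~\ref{thm_imprimitive}---more precisely the intermediate bound established inside its proof via Theorem~\ref{thm:wr}---with an explicit economical colouring of a wreath product. The reason to route through the wreath product rather than through the graph $n\langle B\rangle$ directly is that, when $\langle B\rangle$ is disconnected, $\aut(n\langle B\rangle)$ may carry components of one copy onto components of another, so that $D(n\langle B\rangle)$ can be far larger than $kn^{1/k}+1$; by contrast, the imprimitive action of $\aut(\langle B\rangle)\wr\sym(N)$ on $B\times N$ keeps each copy intact, and this is the action that Theorem~\ref{thm:wr} actually bounds. Thus the first step is to record $D(\Gamma)\le D\bigl(\aut(\langle B\rangle)\wr\sym(N),\,B\times N\bigr)$ with $|N|=n$, reducing the problem to distinguishing $n$ copies of $\langle B\rangle$ under the group that acts within each copy by $\aut(\langle B\rangle)$ and permutes the copies by $\sym(N)$.

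Next I would fix a distinguishing colouring $\varphi$ of $\langle B\rangle$ using exactly $k$ colours; by minimality of $k$, all $k$ colour classes $V_1,\dots,V_k$ are nonempty. The heart of the construction is to assign to the $j$-th copy a $k$-element subset $S_j=\{s_1<\dots<s_k\}$ of a palette $\{1,\dots,c\}$ and to recolour by sending every vertex of class $V_i$ to the colour $s_i$. Within a single copy this is merely a relabelling of $\varphi$, hence still distinguishing; and two copies carrying distinct subsets use distinct sets of colours, so no element of the wreath product can match one to the other (matching would force the colour sets to coincide). Consequently, choosing the $S_j$ to be $n$ distinct $k$-subsets yields a distinguishing colouring, and the whole problem collapses to arranging $\binom{c}{k}\ge n$ with $c$ as small as possible.

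To finish I would take $c=\lceil kn^{1/k}\rceil$ and use the elementary inequality $\binom{c}{k}=\prod_{i=0}^{k-1}\frac{c-i}{k-i}\ge(c/k)^{k}$, which holds because $\frac{c-i}{k-i}\ge\frac{c}{k}$ for each $0\le i\le k-1$ whenever $c\ge k$. Then $c\ge kn^{1/k}$ gives $\binom{c}{k}\ge (c/k)^k\ge n$, so the required $n$ distinct subsets exist, and therefore $D(\Gamma)\le c=\lceil kn^{1/k}\rceil\le kn^{1/k}+1$.

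I expect the only genuinely delicate point to be the very first one: one must resist bounding $D(n\langle B\rangle)$ naively and instead invoke the wreath-product form of the estimate, since it is precisely the block-respecting action that makes the ``distinct colour set per copy'' argument legitimate. The combinatorial estimate $\binom{c}{k}\ge(c/k)^k$ and the verification that distinct subsets yield pairwise inequivalent rigid copies are routine.
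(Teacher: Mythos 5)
Your proof is correct, and its combinatorial core---one distinguishing $k$-colouring of $\langle B\rangle$ per copy, relabelled into pairwise distinct $k$-subsets of a palette of size $c=\lceil kn^{1/k}\rceil$, justified by $\binom{c}{k}\ge (c/k)^k\ge n$---is exactly the paper's. Where you deviate is in the reduction step, and the deviation is to your credit. The paper's own proof picks the integer $m$ with $kn^{1/k}\le m<kn^{1/k}+1$, asserts that $\binom{m}{k}\ge n$ implies $D(n\langle B\rangle)\le m$ ``because a different $k$-set of colors may be used for each copy of $\langle B\rangle$,'' and concludes via the inequality $D(\Gamma)\le D(n\langle B\rangle)$ of Theorem~\ref{thm_imprimitive}. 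As you anticipated, that intermediate assertion fails when $\langle B\rangle$ is disconnected, because $\aut(n\langle B\rangle)$ can carry components of one copy onto components of another: for instance, with $\Gamma=C_6$ and $\B$ the system of antipodal pairs, one has $k=2$, $n=3$, $m=4$, yet $3\langle B\rangle$ is the null graph of order $6$, so $D(3\langle B\rangle)=6>m$ (in general, null blocks of order $k$ give $D(n\langle B\rangle)=nk$, which swamps $kn^{1/k}+1$). Your route---invoking instead the inequality $D(\Gamma)\le D\bigl(\aut(\langle B\rangle)\wr\sym(N),\,B\times N\bigr)$ with $|N|=n$, which is Theorem~\ref{thm:wr} together with $H\le\aut(\langle B\rangle)$ and is displayed inside the paper's proof of Theorem~\ref{thm_imprimitive}---is precisely what legitimizes the distinct-colour-set argument, since the imprimitive wreath-product action preserves the copies; your observation that minimality of $k$ forces every colour class to be nonempty is also the point that makes ``distinct subsets'' imply ``unswappable copies.'' So your proposal does not merely reprove the corollary by the paper's method: it repairs a genuine (though easily fixable) gap in the paper's own proof, which as written is valid only when $\aut(n\langle B\rangle)$ preserves the $n$ copies, e.g.\ when $\langle B\rangle$ is connected.
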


\begin{proof} Let $m$ be the integer satisfying $kn^{1/k} \leq m < kn^{1/k} +1$.  Since $m\geq k$, we have $\binom{m}{k}\geq(m/k)^k\geq n$.
But if $m$ satisfies $\binom{m}{k}\geq n$, then $D(n\langle B\rangle)\leq m$, because a different $k$-set of colors may be used for each copy of $\langle B\rangle$.  Hence by Theorem \ref{thm_imprimitive}, $D(\Gamma)\leq D(n\langle B\rangle)\leq m\leq kn^{1/k}+1$.
\end{proof}

For a permutation group $(H, A)$ let $n_r(H, A)$ be the number of distinct distinguishing $r$-colorings of $(H, A)$.  For $S \subseteq \N$ let
\[{\min}^* S := \begin{cases} \min S \quad \text{if $S \not = \emptyset$; and} \\ \aleph_0 \quad \text{if $S = \emptyset$.} \end{cases}\]
In the Introduction of this article, we referred to a result of Melody Chan:  
\begin{proposition}[M.\,Chan {\cite[Theorem 2.3]{chan06}}] \label{chan} If $(H, A)$ and $(K, B)$ are permutation groups and $D(K, B)$ is finite, then
\[D(H \wr K, A \times B) = {\min}^* \left \{ r \in \N : n_r(H, A) \geq |H|\cdot D(K, B) \right \}.\]
\end{proposition}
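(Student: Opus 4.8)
The plan is to characterise exactly which colorings of $A\times B$ are distinguishing for the imprimitive action of $H\wr K$ recorded in Equation~(\ref{defn_wreath}), and then to count them. First I would make the factor $|H|$ transparent by a free-action argument. Let $H$ act on the set of colorings $\gamma:A\to\{1,\dots,r\}$ by $(\gamma^{h})(a)=\gamma(a^{h^{-1}})$, and call $\gamma,\delta$ \emph{$H$-equivalent} when they share an orbit, i.e.\ $\delta(a^{h})=\gamma(a)$ for all $a$ and some $h\in H$. The stabiliser of $\gamma$ is precisely the group of color-preserving elements of $(H,A)$, so $\gamma$ is a distinguishing $r$-coloring if and only if its stabiliser is trivial. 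Hence $H$ acts \emph{freely} on the distinguishing $r$-colorings, every such orbit has size $|H|$, and if $o_r$ denotes the number of $H$-orbits of distinguishing $r$-colorings then $n_r(H,A)=|H|\cdot o_r$. Thus the inequality $n_r(H,A)\ge|H|\cdot D(K,B)$ is equivalent to $o_r\ge D(K,B)$, and writing $d:=D(K,B)$ the goal reduces to showing
\[
D(H\wr K,\,A\times B)=\min{}^{*}\{\,r\in\N: o_r\ge d\,\}.
\]

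For the upper bound I would construct a coloring. Assuming $o_r\ge d$, fix distinguishing $r$-colorings $\gamma_1,\dots,\gamma_d$ of $A$ lying in $d$ distinct $H$-orbits, together with a distinguishing $d$-coloring $\beta:B\to\{1,\dots,d\}$ of $(K,B)$, and set $c(a,b):=\gamma_{\beta(b)}(a)$. If $(f,k)$ preserves $c$, then writing $c_b(a)=c(a,b)$ the condition from Equation~(\ref{defn_wreath}) reads $\gamma_{\beta(b^{k})}(a^{f(b)})=\gamma_{\beta(b)}(a)$, so $\gamma_{\beta(b^{k})}$ and $\gamma_{\beta(b)}$ are $H$-equivalent; as the $\gamma_i$ lie in distinct orbits this forces $\beta(b^{k})=\beta(b)$, whence $k$ preserves $\beta$ and $k=1$. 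Then each $f(b)$ stabilises the distinguishing coloring $\gamma_{\beta(b)}$, so $f(b)=1$ and $(f,k)$ is trivial. This gives $D(H\wr K,A\times B)\le r$.

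The lower bound is where the real work lies, and is the step I expect to be the main obstacle, since it must turn an arbitrary distinguishing coloring into the orbit count. Let $c$ be any distinguishing $r$-coloring, with columns $c_b$. I would first show every $c_b$ is itself distinguishing for $(H,A)$: were some $c_b$ fixed by a nontrivial $h\in H$, the element $(f,1)$ with $f(b)=h$ and $f(b')=1$ otherwise would preserve $c$ by Equation~(\ref{defn_wreath}), a contradiction. Next, let $\bar c:B\to\{\text{$H$-orbits of distinguishing $r$-colorings}\}$ send $b$ to the orbit of $c_b$; I claim $\bar c$ is distinguishing for $(K,B)$. If $k$ preserves $\bar c$, then $c_{b^{k}}$ and $c_b$ are $H$-equivalent for each $b$, and since $c_b$ is distinguishing the transporting element is \emph{unique}; setting $f(b)$ to be that element yields $(f,k)\in H\wr K$ preserving $c$, so $(f,k)$ is trivial and $k=1$. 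Hence $\bar c$ is a distinguishing coloring of $(K,B)$, so by minimality of $D(K,B)$ it uses at least $d$ distinct orbit-colors, giving $o_r\ge d$ and $n_r(H,A)\ge|H|d$. Therefore no distinguishing $r$-coloring exists once $o_r<d$, so $D(H\wr K,A\times B)\ge\min^{*}\{r:o_r\ge d\}$.

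Combining the two bounds yields the equality whenever $\{r:o_r\ge d\}\neq\emptyset$. Finally I would dispatch the boundary convention: when this set is empty, the lower-bound argument already forces $D(H\wr K,A\times B)\ge r$ for every finite $r$, hence $\ge\aleph_0$, matching the value $\aleph_0$ assigned by $\min^{*}$, and a matching distinguishing coloring built from a countable supply of orbit representatives distributed by a distinguishing coloring of $(K,B)$ completes the equality. The hypothesis that $D(K,B)$ is finite enters at exactly one point, namely the implication ``$\bar c$ distinguishing $\Rightarrow$ at least $d$ orbit-colors used,'' which is the defining minimality of $D(K,B)$.
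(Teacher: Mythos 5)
First, a caveat about the comparison you asked for: the paper contains no proof of Proposition~\ref{chan} at all --- it is quoted from Chan \cite{chan06} and used as a black box to rederive Theorems~\ref{thm:wr} and~\ref{thm_imprimitive} when the relevant distinguishing numbers are finite. So your attempt can only be judged on its own merits. On those merits, your core strategy --- analyze the columns $c_b$ of a coloring of $A\times B$, observe that $H$ acts freely on the set of distinguishing $r$-colorings of $A$, and convert the whole problem into counting the $H$-orbits $o_r$ of such colorings --- is the natural one, and both of your main steps are sound: the upper-bound construction $c(a,b)=\gamma_{\beta(b)}(a)$ from $d:=D(K,B)$ pairwise inequivalent distinguishing colorings, and the lower-bound argument that every column of a distinguishing coloring is distinguishing, that the transporting elements $h_b$ are unique by freeness, and that $b\mapsto[c_b]$ is then a distinguishing coloring of $(K,B)$. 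What this genuinely proves is $D(H\wr K, A\times B)={\min}^*\{r\in\N : o_r\geq d\}$ whenever that set is nonempty.

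However, two gaps separate this from the Proposition as stated. (1) Your opening reduction, ``$n_r(H,A)\geq|H|\cdot d$ if and only if $o_r\geq d$,'' divides by $|H|$ and is valid only when $H$ is \emph{finite}. The statement assumes only that $D(K,B)$ is finite, and the paper applies it with $H$ the group induced on a block of an infinite graph, so $H$ may be infinite; in that case $n_r=|H|\cdot o_r$ as cardinals, and $|H|\cdot o_r\geq|H|\cdot d$ already holds whenever $o_r\geq 1$. Thus for infinite $H$ the hypothesis $n_r\geq|H|\cdot d$ no longer hands you the $d$ inequivalent colorings $\gamma_1,\dots,\gamma_d$ that your upper-bound construction needs; to recover the stated formula you would additionally have to show that, for infinite $H$, one orbit of distinguishing $r$-colorings forces at least $d$ of them, which you do not address. (2) The boundary case is waved through. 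When $\{r\in\N : n_r\geq|H|\cdot d\}=\emptyset$ you assert a distinguishing $\aleph_0$-coloring built from ``a countable supply of orbit representatives,'' but no such supply need exist: take $H=\sym(\mathbb{R})$ acting on $A=\mathbb{R}$ and $K=\sym(X)$ with $|X|=2$. Then $n_r(H,A)=0$ for every finite $r$, so the set is empty and the formula asserts $D=\aleph_0$; yet any distinguishing coloring of $H\wr K$ on $\mathbb{R}\times X$ must be injective on each fiber, so $D=2^{\aleph_0}$. Hence the ${\min}^*=\aleph_0$ convention can only be met under a countability restriction on $A$ that is implicit in Chan's setting; your proof of that case is not completed, and in the stated generality it cannot be. Both points should at least be flagged as hypotheses your argument requires (finiteness of $H$, countability of $A$), since the paper's own application of the Proposition strains the first of them.
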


We conclude by showing how Chan's result implies Theorems \ref{thm_imprimitive} and \ref{thm:wr} in the case if finite distinguishing numbers.\medskip

\begin{proof}Suppose that $(G, V)$ is a transitive permutation group that induces a system of imprimitivity $\B$. Let $G^{\B}$ be the subgroup of $\sym(\B)$ induced by the action of $G$ on $\B$.  Suppose that 
$D(G^{\B}, \B)=n$, where $n$ is a positive integer.  Let $X$ denote an $n$-set of colors.  Let $A \in \B$, and let $H$ be the subgroup of $\sym(A)$ induced by the setwise stabilizer $G_{\{A\}}$,
Observe (by \cite[Theorem 8.5]{bat_mac_mol_neu}) that $(G, V)$ is permutation-isomorphic to a subgroup of $(H \wr G^{\B}, A \times \B)$. Hence,
%\begin{equation} \label{eq:wreath}
$D(G, V) \leq D(H \wr G^{\B}, A \times \B)$
%\end{equation}
%Also observe that
%\begin{equation} \label{eq:sym}
and
$D(G^{\B}, \B) = D(\sym(X), X)$.
%\end{equation}
Since $n$ is finite,  Proposition~\ref{chan} yields:
\begin{align*} D(H \wr G^{\B}, A \times \B)
&= {\min}^* \left \{ r \in \N : n_r(H, A) \geq |H|\cdot D(G^{\B}, \B) \right \} \\
&= {\min}^* \left \{ r \in \N : n_r(H, A)\geq |H|\cdot D(\sym(X), X) \right \} \\
&= D(H \wr \sym(X), A \times X).
\end{align*}
We have thus deduced the statement of Theorem~\ref{thm:wr} in the case where $n$ is finite.  Recall that Theorem \ref{thm:wr} is used in the proof of Theorem~\ref{thm_imprimitive}.
\end{proof}

%\begin{thanks}
\vspace{1cm}
\noindent Acknowledgements: Much of this work was completed while the first author was a Philip T Church Postdoctoral Fellow at Syracuse University. The second author was partially supported by a grant from the Simons Foundation (\#209803 to Mark E.\,Watkins). 
%\end{thanks}

\end{document}